\theoremstyle{plain}
\newtheorem{theorem}{Theorem}
\newtheorem{corollary}{Corollary}
\newtheorem{proposition}{Proposition}
\theoremstyle{definition}
\newtheorem{definition}{Definition}
\theoremstyle{remark}
\newtheorem{remark}{Remark}
\numberwithin{equation}{section} 
\begin{document}
\newcommand{\pic}{$\spadesuit$}
\newcommand{\N}{\mathbb{N}}
\newcommand{\Z}{\mathbb{Z}}
\newcommand{\C}{\mathbb{C}}
\newcommand{\R}{\mathbb{R}}
\newcommand{\F}{\mathbb{F}}
\newcommand{\defineL}{\,\mathrel{\mathop:}=}
\newcommand{\defineR}{ =\mathrel{\mathop:}\,}
\newcommand{\be}{\begin{equation}}
\newcommand{\ee}{\end{equation}}
\newcommand{\bi}{\begin{itemize}}
\newcommand{\ei}{\end{itemize}}
\newcommand{\with}{\qquad\text{with}\qquad}
\newcommand{\mand}{\qquad\text{and}\qquad}
\newcommand{\sep}{\; \text{,}\qquad}
\newcommand{\ssep}{\: \text{,}\quad}
\newcommand{\pd}{\partial}
\newcommand{\pdo}{\overline{\partial}}
\newcommand{\ov}[1]{\overline{#1}}
\newcommand{\inv}[1]{\frac{1}{#1}}
\newcommand{\tinv}[1]{\tfrac{1}{#1}}
\newcommand{\abs}[1]{|#1|}
\newcommand{\B}[1]{\mathbf{#1}}
\newcommand{\op}[1]{\text{#1}}
\newcommand{\bracket}[1]{\left( #1 \right)}
\newcommand{\bracketi}[1]{\bigl( #1 \bigr)}
\newcommand{\bracketii}[1]{\Bigl( #1 \Bigr)}
\newcommand{\bracketiii}[1]{\biggl( #1 \biggr)}
\newcommand{\bracketiv}[1]{\Biggl( #1 \Biggr)}
\newcommand{\bpm}{\begin{pmatrix}}
\newcommand{\epm}{\end{pmatrix}}
\newcommand{\bnm}{\begin{matrix}}
\newcommand{\enm}{\end{matrix}}
\newcommand{\w}{\wedge}
\newcommand{\tp}{\otimes}
\newcommand{\ds}{\oplus}
\newcommand{\no}{\nonumber}
\newcommand{\DD}{\mathfrak{D}}
\newcommand{\LL}{\mathfrak{L}}
\newcommand{\FF}{\mathfrak{F}}
\newcommand{\dd}{\mathfrak{d}}
\newcommand{\ff}{\mathfrak{f}}
\newcommand{\cc}{\mathfrak{c}}
\newcommand{\nn}{\mathfrak{n}}
\newcommand{\mm}{\mathfrak{m}}
\newcommand{\coder}{\mathfrak{coder}}
\newcommand{\cohom}{\mathfrak{cohom}}
\newcommand{\morph}{\mathfrak{morph}}
\newcommand{\liftd}[1]{D(#1)}
\newcommand{\lifte}[1]{e^{#1}}
\newcommand{\vect}[1]{\accentset{\rightharpoonup}{#1}}
\newcommand{\T}{\mathsf{T}}

\title[Homotopy Algebras and Strings ]{On Homotopy Algebras and  Quantum String Field Theory} 

\author{Korbinian M\"unster}
\address{LMU\\ Arnold Sommerfeld Center for Theoretical Physics\\ Theresienstrasse 37\\ D-80333 Munich\\ Germany}

\email{korbinian.muenster(at)physik.uni-muenchen.de}


\author{Ivo Sachs}

\address{LMU\\ Arnold Sommerfeld Center for Theoretical Physics\\ Theresienstrasse 37\\ D-80333 Munich\\ Germany}


\email{Ivo.Sachs(at)lmu.de}

\begin{abstract}
We revisit the existence, background independence and uniqueness of closed, open and open-closed bosonic- and topological string field theory, using the machinery of homotopy algebra. 
In a theory of classical open- and closed strings, the space of inequivalent open string field theories is isomorphic to the space of classical closed string backgrounds. We then discuss obstructions of
these moduli spaces at the quantum level. For the quantum theory of closed strings, uniqueness on a given background follows from the decomposition theorem for loop homotopy algebras. 
We also address the question of  background independence of closed string field theory. 
\end{abstract}


\subjclass{55B10, 83E30}

\keywords{string field theory, homotopy algebra, topological string}

\maketitle

\section{Introduction}
The standard formulation of classical string theory consists of a set of rules to compute scattering amplitudes for a set of $n$ (excited) strings typically propagating on a D-dimensional Minkowski space-time $M_D$. This prescription involves an integration over the moduli space of disks with $n$ punctures for open strings (or spheres with $n$ punctures for the closed strings). Comparing this with the approach taken for point particles the situation in string theory seems incomplete. Indeed, for point particles one starts with an action principle 
and then obtains the classical scattering amplitudes by solving the equations of motions deriving from this action. Since the various string excitations ought to be interpreted as particles one would hope to be able to apply the same procedure for the scattering of strings. 
The aim of string field theory is precisely to provide such an action principle so that the set of rules to compute scattering amplitudes for strings follow from this action. Since the string consists of a infinite linear superpositions of point particle excitations one would expect that such an action may be rather complicated. Yet the first construction of a consistent classical string field theory of interacting open strings \cite{Witten:1985cc} has a remarkably simple algebraic structure of a differential graded algebra (DGA) together with a non-degenerate odd symplectic form. 

The geometric approach for the construction of string field theory \cite{Zwiebach closed, Zwiebach open-closed}, starts with a decomposition of the relevant moduli space of Riemann surfaces into elementary vertices  and graphs. The condition that the moduli space is covered exactly once, implies that the geometric vertices satisfy a classical Batalin-Vilkovisky master equation. From this one then anticipates that any string field theory action should realize some homotopy algebra. The subject of this talk is to investigate to what extend this algebraic structure is useful, and to determine certain additional properties that should be satisfied by any consistent string field theory. In particular, it is of interest to know in what sense string field theory is unique. Another related issue stems from the fact that the construction of string field theory assumes that the string propagates in a certain  string background, whose geometry is that of Minkowski space. However, since string theory includes gravity, this background is dynamical. The question of background independence of this construction is thus relevant.

To set the stage, let us start with the well understood case of a single point particle  propagating on a non-compact manifold $M_D$ with a pseudo-Riemannian metric $g$. The world line of the particle is described by a curve $\phi: [a,b]\to M_D$ that extremizes the action 
\begin{align} 
S[\phi,h]=\int\limits_{[a,b]}\frac{1}{\sqrt{h_{tt}}}  g(\dot\phi,\dot\phi) dt\nonumber
  \end{align}
where $h_{tt}$ is a non-dynamical "metric" on the world line that can be set to 1 by a suitable reparametrization of $t$.  Similarly, for an open string we have a map $\phi: \Sigma=[a,b]\times [c,d]\to M_D$ that extremizes the action
\begin{align} \label{as}
S[\phi,h]=\int\limits_{\Sigma}\sqrt{h}h^{ij}g(\partial_i\phi,\partial_j\phi)
  \end{align}
  so that the area is minimal. If the Riemann curvature of $M_D$ vanishes, then the action (\ref{as}) is invariant under conformal mappings of the {\it world sheet} $\Sigma$. For $[a,b]=[-\infty,\infty]$, we can conformally map $\Sigma$ to a disk with $2$ punctures on its boundary. Analogously, a world sheet describing $n-1$ strings joining into one can be mapped into a disk with $n$ punctures. In order to specify which particles (or string excitations) are involved in the scattering amplitude we need to endow the puncture with additional structures. This is done by attaching conformal tensors $\{V_i[\phi]\}$ built out of the maps $\phi$  evaluated at the puncture and the coefficients of the Laurent polynomial of $\phi$ evaluated in local coordinates. The amplitude is then expressed in terms of  the $n$-point correlator
  \begin{align} \label{ccft}
<V_{i_1}(z_1),\cdots , V_{i_n}(z_n)> \;\text{,}
  \end{align}
with respect to the (formal) Gau\ss ian measure defined by $S[\phi]$. In fact the correlator (\ref{ccft}) which is called a conformal field theory correlator in physics is not quite what one needs. In order to get the string scattering amplitude we need to integrate over the moduli space of the punctured disk. Now, since the action $S[\phi,h]$ is invariant under diffeomorphisms on the world sheet $\Sigma$ as well as under Weyl re-scalings of the world sheet metric $h$ we really want to integrate over the $(n-3)$-dimensional gauge-fixed moduli space $M_{n-3}$ (for a review see e.g. \cite{Witten:2012bh} and references therein). Treating the gauge-fixed action using the standard BRST formalism we end up with an action $S[\phi,c,b]$ including odd world sheet tensors fields (BRST ghosts) together with an odd differential $Q_o$ that generates the odd symmetry transformations of the gauge fixed action. Similarly, the insertions at the punctures of $\Sigma$ contain added Laurent coefficients of the $b$ and $c$ ghosts. The string amplitude can be written schematically as in figure 1, 
\begin{figure}[h] \centering
\begin{minipage}[b]{0.4\textwidth}
\resizebox{4.5cm}{!}{
\input{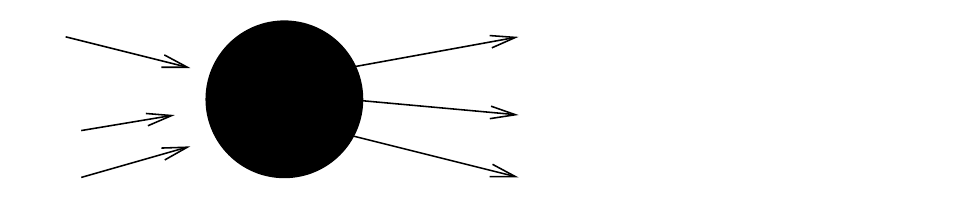_t}}
\vspace{4mm}
\end{minipage}
\begin{minipage}[b]{0.4\textwidth}
\resizebox{3.5cm}{!}{
\input{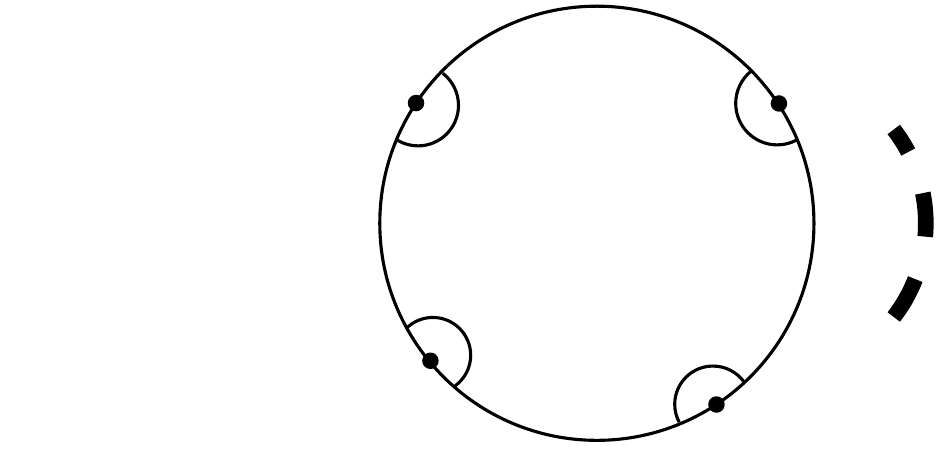_t}}
\end{minipage}
\caption{Sketch of the CFT realization of the scattering amplitude of $n$ open strings.}
\end{figure}
where the $n-3$ meromorphic vector fields $v_i$  are constant near the puncture $P_i$, and
cannot be extended to the whole disk. These
vector fields generate translations in the moduli space; they move the punctures.  Concretely, this amplitude becomes
\begin{align}
\int_{M_{n-3}} ds_1 \ldots ds_{n-3} \, \langle  b(v_1) 
\ldots b(v_{n-3}) \, V_{i_1}[\phi,b,c](z_1) \ldots V_{i_n}[\phi,b,c](z_n)\rangle,
\label{Cdef}
\end{align}
where the correlator is evaluated with respect to the measure obtained from the world sheet action $S[\phi,c,b]$.
What we have just described is  what is usually referred to as  the operator formalism of the world sheet conformal field theory (CFT), which dresses the geometric amplitudes (punctured disks) with the physical states (particles). The amplitudes  (\ref{Cdef}) are well defined on the cohomology of $Q_o$. 

The purpose of string field theory is two-fold. First to reproduce these amplitudes in terms of vertices and graphs built from them and second to generalize the amplitudes  (\ref{Cdef})  on $coh(Q_o)$ to the module $A_o$ of  all conformal tensors with suitable regularity conditions. At the geometrical level, the simplest possible construction would be that of a single vertex of $3$ joining strings which has no moduli, with all amplitudes recovered from graphs built from $3$-vertices. This is indeed possible for the open bosonic string \cite{Witten:1985cc}. However, the decomposition of moduli space is not unique so that other realizations are possible where higher order vertices are needed to recover the amplitudes (\ref{Cdef}). In any case the geometric vertices  in any consistent decomposition form a BV algebra.  

The world sheet CFT  then defines a morphism of BV algebras between the set of geometric vertices $\{\mathcal{V}_n\}$, and the dressed "physical" vertices. It also provides us with an inner product on the graded module $A_o$ generated by the conformal tensors $V_{i}[\phi,b,c]$ of the $(\phi,b,c)$ - CFT inserted at the origin in the local coordinate $z$ around a puncture $P$ on the disk. With the help of the latter we can interpret the set of physical vertices as multilinear maps $m_i: A_o^{\otimes i} \to A_o$, $m_1=Q_o$, with some further symmetry properties implied by the cyclic symmetry of the vertices. We denote by  $C(A_o)$, the space of such multilinear maps on $A_o$. 
 It is then not hard to see that the BV-master equation implies that the maps $m_i$ define an $A_\infty$-structure. One way to see this is to define a coderivation $M$ of degree 1 on the tensor algebra $TA_o=\oplus_n A_o^{\otimes n}$ with components
\begin{align}
  (M)_{n,u}=\sum\limits_{\stackrel{r+s+t = n}{r+1+t = u}} 
  {\bf 1}^{\otimes r}\otimes m_s\otimes  {\bf 1}^{\otimes t}\nonumber
\end{align}
Imposing vanishing of  the graded commutator $[M,M]$, we obtain a characterization of all 
differentials compatible with the $A_\infty$-structure. 

The 
classical solutions of the string field theory action defined by the maps $m_i$ together with $<\cdot ,\cdot>$ are given by the Maurer-Cartan elements, $M(e^{\psi_0})=0$. 

There is an analogous story for classical closed strings obtained from the above by replacing the punctured disk by a punctured sphere with world sheet conformal field theory $S[\Phi, c ,\bar{c} ,b,\bar{b}]$ and dressed by conformal tensors $V_{i}[\Phi,b,\bar b,c,\bar c]$ where $b,\bar b,c$ and $\bar c$ depend holomorphically and anti-holomorphically on the world sheet coordinates $z$ and $\bar z$, respectively. The CFT then provides a morphism between the set of geometric vertices and the (dressed) physical  vertices of closed string field theory. The latter can again be interpreted as maps, $l_i$ on the garaded symmetric module $SA_c=\oplus_n A_c^{\wedge n}$. They accordingly realize an $L_\infty$ algebra $(A_c,L)$, with $[L,L]=0$.

Finally, we let open and closed strings interact with each other. The open closed vertices consist of disks with punctures on the boundary as well as on the disk. These vertices realize an $L_\infty$ morphism $F$, between the closed and open sector taken separately, 
\begin{align}\label{eq:ocha}
(A_c,L)\xrightarrow[]{\text{$F$}} (\op{Coder}^{cycl}(TA_o),d_h,[\cdot,\cdot])\;\text{}\,.
\end{align}
This is the open-closed homotopy algebra of Kajiura and Stasheff \cite{Kajiura:2004xu}.

\begin{remark}\label{rem:1}
Note that, while the geometric decomposition of the moduli spaces appearing in the construction of string field theory, just reviewed, is independent of the details of $M_D$ the operator formalism makes explicit use of the geometry of $M_D$ as well as possible other background fields inserted at the punctures. In particular, the module $A$ of conformal tensors typically depends on these data. This is where the background dependence enters in the construction of string field theory. This is in contrast to e.g. General Relativity where the action does not depend on any background metric on $M_D$.   
\end{remark}

A natural question that arises in the above context is whether for  a given background (in the sense just described) the generalization of  (\ref{Cdef}) as well as its closed string version is unique. For classical string field theory the answer to this question is affirmative, as follows form  the decomposition theorem \cite{Kajiura:2004xu} for homotopy algebras. This theorem establishes an isomorphism between a given homotopy algebra and the direct sum of a linear contractible algebra and a minimal model. In the context of string field theory, the structure maps of the minimal model are given by (\ref{Cdef}). 

In this talk we discuss the following generalizations of the results reviewed above:
\begin{itemize}
\item classification of inequivalent deformations of classical open string field theory. 
\item background independence of closed string field theory.
\item decomposition theorem for quantum closed string field theory.
\item quantization of the open closed homotopy algebra.
 \end{itemize}

%
%
%

\section{Results}
Let us start with non-trivial deformations of open string field theory. That is we consider  {\it continuous} deformations of the worldsheet CFT that do not preserve $Q_o$ and   (\ref{Cdef}) simultaneously . The usefulness of the homotopy formulation of SFT in this respect is that this problem can be formulated as a cohomology problem. Indeed, since any consistent open string field theory realizes an $A_\infty$ algebra, i.e. defines a coderivation $M$ of degree 1 on the tensor algebra $TA_o$ with $[M,M]=0$, any infinitesimal deformation $M+\delta M$ satisfies $d_H(\delta M)\equiv [M,\delta M]=0$. For a given  worldsheet CFT one would therefore like to determine $coh(d_H)$. The outcome of this analysis is contained in 
\begin{theorem}[ \cite{Moeller:2010mh} ]\label{thm:01}
Let  $S[\phi,c,b]$ be the open string world sheet CFT on $M_D$,  $A_o$ the corresponding module of conformal tensors,  $Q_o$ the BRST differential, and  (\ref{Cdef}) the corresponding string amplitudes on $coh(Q_o)$. Then the only non-trivial infinitesimal deformations of $S[\phi,c,b]$ preserving $A_o$ are infinitesimal deformations of the closed string background in the relative cohomology of $Q_c$,
\begin{align}\label{dhqc}
coh(d_H)\cong coh(b_0-\bar b_0,Q_c) \;\text{.}\nonumber
\end{align}
\end{theorem}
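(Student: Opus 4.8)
The plan is to recognise $coh(d_H)$ as the cyclic Hochschild cohomology of the $A_\infty$-algebra $(A_o,\{m_i\})$ and to match it, through the open-closed homotopy algebra (\ref{eq:ocha}), with the relative $Q_c$-cohomology of the bulk sector. As a first step one notes that a deformation $M\mapsto M+\delta M$ must preserve the cyclic inner product $\langle\cdot,\cdot\rangle$ supplied by the world sheet theory, so the complex in question is the cyclic sub-complex of $C(A_o)=\op{Coder}(TA_o)$ equipped with $d_H=[M,\cdot]$, and $coh(d_H)$ is by definition its cyclic Hochschild cohomology. Since $A_o$ is the full module of conformal tensors and not $coh(Q_o)$, I would first invoke the decomposition theorem quoted above: the minimal model $(coh(Q_o),M_{\min})$, whose structure maps are the amplitudes (\ref{Cdef}), is cyclically $A_\infty$-quasi-isomorphic to $(A_o,M)$, and cyclic Hochschild cohomology is a quasi-isomorphism invariant, so we may replace $A_o$ by $coh(Q_o)$ and assume $m_1=0$.

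Next I would build the comparison map from the closed string side. The linear part $F_1$ of the $L_\infty$-morphism $F$ in (\ref{eq:ocha}) sends a closed string state $\psi$ to the coderivation of $TA_o$ whose $n$-th component is the disk correlator with $\psi$ inserted in the bulk and $n$ open insertions on the boundary. The lowest-order $L_\infty$-relations for $F$ read $F_1(Q_c\psi)=[M,F_1(\psi)]=d_H F_1(\psi)$, so $F_1$ is a chain map, and cyclicity of the open-closed vertices places it in the cyclic sub-complex. The relative structure enters through the sewing used to define $F_1(\psi)$: the closed string is glued into the disk along a collar whose rotational modulus produces exactly a $b_0-\bar b_0$ insertion, and the glued surface is well defined only for level-matched states, $(L_0-\bar L_0)\psi=0$. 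Hence $F_1$ descends to a map $coh(b_0-\bar b_0,Q_c)\to coh(d_H)$, and the theorem asserts that it is an isomorphism.

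For the isomorphism itself I would filter $C(A_o)$ by the number of open inputs; in leading order this reduces the cohomology to the Hochschild cohomology of the \emph{associative} star-product algebra on $coh(Q_o)$, with the higher products $m_{k\ge3}$ and the cyclic structure contributing correction terms. For a single boundary condition on $M_D$ I would evaluate this Hochschild cohomology by splitting the $(\phi,b,c)$ CFT into the free matter boson and the $bc$-ghost sector and using the bulk-boundary operator product expansion together with the state-operator correspondence: cochains with a single open input are matched bijectively with bulk vertex operators, cochains with two or more open inputs turn out to be exact, and the ghost-number and zero-mode constraints that survive are precisely $b_0-\bar b_0=0$ and $L_0-\bar L_0=0$. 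Combining the vanishing of the correction terms on these classes with the identification of the survivors as the image of $F_1$, and using non-degeneracy of $\langle\cdot,\cdot\rangle$ to pass to the cyclic level, yields the claimed isomorphism.

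I expect the main obstacle to be precisely this last computation, namely the open-closed duality statement that the cyclic Hochschild cohomology of the boundary sector of the $(\phi,b,c)$ CFT on $M_D$ equals the relative closed string BRST cohomology --- equivalently, that no cyclic cochain with several open inputs survives and that no nonzero relative closed string class deforms $M$ trivially. The free-field evaluation is available in principle, but the delicate part is the bookkeeping of the ghost zero modes: it is exactly this that converts the absolute bulk cohomology into the relative one, and it has to be controlled uniformly in conformal weight and ghost number in order to conclude that the correction terms do not spoil the identification.
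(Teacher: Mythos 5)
There is a genuine gap, and you have in fact located it yourself in your last paragraph. The content of Theorem \ref{thm:01} is precisely the assertion that the comparison map $F_1$ induced by the bulk--boundary (open--closed) vertices is an isomorphism from $coh(b_0-\bar b_0,Q_c)$ onto $coh(d_H)$; everything you establish rigorously --- that the deformation complex is the cyclic subcomplex of $\op{Coder}(TA_o)$ with differential $[M,\cdot]$, that one may pass to the minimal model by quasi-isomorphism invariance, that $F_1$ is a chain map by the lowest $L_\infty$-relation of (\ref{eq:ocha}), and that the collar modulus forces the semi-relative conditions $b_0-\bar b_0=0$, $L_0-\bar L_0=0$ --- is the \emph{setup} of the problem, not its solution. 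The two assertions you defer (``cochains with two or more open inputs turn out to be exact'' and ``no nonzero relative closed string class deforms $M$ trivially'', i.e.\ surjectivity and injectivity of $F_1$ on cohomology) are exactly the theorem, and no mechanism is offered for proving them beyond the statement that a free-field evaluation is ``available in principle''. The filtration by number of open inputs also does not obviously close: the associated spectral sequence's first page is the cyclic cohomology of the star-product algebra, which is not known to vanish in higher input degree a priori --- establishing that vanishing \emph{is} the computation.

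For comparison: the paper itself gives no argument either, deferring entirely to \cite{Moeller:2010mh} with the one-line remark that the proof ``proceeds via a detailed analysis of the deformations of the CFT correlator (\ref{Cdef})''. That reference works directly with the correlator deformations of Witten's cubic theory (where $m_{k\geq 3}=0$), rather than passing to the minimal model first, and carries out precisely the ghost-zero-mode and moduli-boundary bookkeeping that you flag as the delicate step; it is there that the absolute cohomology gets cut down to the semi-relative one. So your scaffolding is compatible with the actual proof and correctly identifies where the $b_0-\bar b_0$ condition originates, but as it stands the proposal is a reduction of the theorem to an equivalent unproved statement, not a proof.
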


\begin{remark}\label{rem:2}
A particular class of deformations that do not preserve  $Q_o$ and  (\ref{Cdef}) are shifts in the open string background $\phi_0\to \phi_0+\epsilon\delta \phi$ with $M(e^{ \phi_0+\epsilon\delta \phi})=O(\epsilon^2)$. Such transformations are, however, $d_H$-exact as are all field redefinitions of $\phi$. From a physics perspective, the interesting fact implied by theorem \ref{thm:01} is that open string theory already contains the complete information of the particle content of {\it closed} string theory. 
\end{remark}

\begin{proof} The proof of this assertion proceeds via a detailed analysis of the deformations of the CFT correlator  (\ref{Cdef}). 
\end{proof}

Given the isomorphism between the cohomologies one may wonder whether this isomorphism holds for finite deformations. On the closed string side finite deformations correspond to classical solutions of the closed string field theory equation of motion, that is Maurer-Cartan elements $L(e^{ \Phi})=0$, whereas finite deformations of open string field theory are Maurer-Cartan elements of $[\cdot,\cdot]$ on $\{M\in \op{Coder}^{cycl}(TA_o)\}$, that is $[M,M]=0$. A classic theorem of Kontsevich then guarantees isomorphism at the finite level, or more precisely that the moduli spaces of two $L_\infty$-algebras connected by a $L_\infty$-quasi-isomorphism are isomorphic. Thus, we have 
\begin{corollary}\label{col:1}
 Let $\mathcal{M}(A_c,L)$ and $\mathcal{M}(\op{Coder}^{cycl}(TA_o),[\cdot,\cdot])$ be the moduli space of Maurer-Cartan elements obtained by moding out $L$- and $[\cdot,\cdot]$ -gauge transformations respectively, then we have 
 \be\label{mm}
 \mathcal{M}(A_c,L)\cong \mathcal{M}(\op{Coder}^{cycl}(TA_o),d_h,[\cdot,\cdot]) \;\text{.}\nonumber
 \ee
\end{corollary}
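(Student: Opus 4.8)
\emph{Proof plan.} The strategy is to feed the open--closed homotopy algebra morphism $F$ of \eqref{eq:ocha} into the general principle --- due to Goldman--Millson in the DGLA case and to Kontsevich in the $L_\infty$ setting --- that an $L_\infty$-quasi-isomorphism induces a bijection of Maurer--Cartan moduli spaces. Recall that $F$ assigns to a Maurer--Cartan element $\Phi$ of $(A_c,L)$, i.e. a solution of $L(e^{\Phi})=0$, the coderivation
\begin{align}
\delta(\Phi)\defineL\sum_{n\geq 1}\tinv{n!}\,F_n(\Phi^{\wedge n})\in\op{Coder}^{cycl}(TA_o),\nonumber
\end{align}
and the $L_\infty$-morphism identities for $F$ force $\delta(\Phi)$ to be a Maurer--Cartan element of the DGLA $(\op{Coder}^{cycl}(TA_o),d_h,[\cdot,\cdot])$, equivalently $[M+\delta(\Phi),M+\delta(\Phi)]=0$ with $M=\{m_i\}$ the undeformed open string $A_\infty$-structure; the same identities send $L$-gauge orbits to $[\cdot,\cdot]$-gauge orbits. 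Hence the assignment descends to a map $\mathcal{M}(A_c,L)\to\mathcal{M}(\op{Coder}^{cycl}(TA_o),[\cdot,\cdot])$, and it only remains to show that this map is a bijection, for which it suffices that $F$ be a quasi-isomorphism, plus a remark on the sense in which the moduli spaces are defined.

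To see that $F$ is an $L_\infty$-quasi-isomorphism I would check that its linear part $F_1$ is a quasi-isomorphism of the underlying complexes. The target complex carries the differential $d_h=[M,\,\cdot\,]$, whose cohomology is by definition $coh(d_H)$. The source must be taken to be the \emph{relative} complex, since closed string states are subject to the level-matching constraint; the relevant differential is therefore $(b_0-\bar b_0,Q_c)$. Theorem~\ref{thm:01} gives $coh(d_H)\cong coh(b_0-\bar b_0,Q_c)$, and --- this is the crucial input one extracts from the proof of that theorem --- the isomorphism is realized by $F_1$ itself, which sends a closed string state to the first-order deformation of the open string vertices obtained by inserting the associated bulk vertex operator on the disk. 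Consequently $F_1$ induces an isomorphism on cohomology on the relative complexes, so $F$ is an $L_\infty$-quasi-isomorphism.

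Finally one must pin down the precise category in which the moduli spaces live so that the Kontsevich/Goldman--Millson statement applies literally. As is standard in string field theory one proceeds perturbatively: Maurer--Cartan elements are taken with values in the maximal ideal of an Artinian base ring (equivalently, one inserts a formal deformation parameter), so that the series above converge and $F$ descends to an honest bijection of quotient sets; alternatively one uses the natural order/level filtration together with the filtered form of the theorem. Either way one obtains \eqref{mm}, and since $F_1$ intertwines the inner products on $A_c$ and $A_o$ the bijection is moreover compatible with the cyclic (symplectic) structures.

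The main obstacle is the precise input required from Theorem~\ref{thm:01}: it is not enough to know that the two cohomologies are abstractly isomorphic --- one needs the isomorphism to be the one induced by the linear part of the OCHA morphism $F$, which is exactly what the detailed CFT analysis behind Theorem~\ref{thm:01} produces; absent this, the mere existence of the $L_\infty$-morphism $F$ does not imply quasi-isomorphism. A secondary technical point is to verify that $F$ indeed takes values in $\op{Coder}^{cycl}(TA_o)$ (the deformed products must stay cyclic with respect to the inner product on $A_o$) and that the homotopy algebras at hand satisfy whatever nilpotency/filtration hypotheses are needed for $L_\infty$-quasi-isomorphisms to induce isomorphisms of Maurer--Cartan moduli.
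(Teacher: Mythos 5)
Your proposal follows essentially the same route as the paper: identify the open--closed morphism $F$ as an $L_\infty$-quasi-isomorphism via Theorem~\ref{thm:01}, then invoke Kontsevich's theorem that $L_\infty$-quasi-isomorphisms induce isomorphisms of Maurer--Cartan moduli spaces. Your additional observations --- that the cohomology isomorphism of Theorem~\ref{thm:01} must be the one realized by the linear part $F_1$ of the morphism, and that a formal/filtered setting is needed for the Kontsevich statement to apply --- are points the paper leaves implicit but do not constitute a different argument.
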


 We will return to the question whether this isomorphism survives quantization below but first we would like to turn to background independence of closed string field theory. As mentioned above for a given background the operator formalisms realizes a certain $L_\infty$ algebra. Furthermore, for a given classical solution $\Phi_0$ in this field theory we then obtain a new homotopy algebra upon conjugation by this Maurer-Cartan element. Background independence then would imply that the structure maps of the minimal model obtained from this homotopy algebra are equivalent to the amplitudes   (\ref{Cdef}) obtained with the measure of the world-sheet CFT $S[\Phi,c,\bar c,b,\bar b]$ in the new background (see figure 2). 
 
 \begin{figure}
\begin{tikzpicture}[node distance=5cm, auto]
  \node (S1) {$S_{CFT}$};
  \node (S2) [below of=S1] {$S^\prime_{CFT}$};
  \node (L1) [right of=S1] {$\{l_n\}_{n\in \N}$};
  \node (L2) [right of=S2] {$\{l_n^\prime\}_{n \in \N}$};
  \draw[->] (S1) to node [swap] {$S_{CFT}\to S_{CFT}+\int \Phi_0$} (S2);
  \draw[->] (S1) to node {operator formalism} (L1);
  \draw[->] (S2) to node {operator formalism} (L2);
  \draw[->] (L1) to node {$L\to e^{-\Phi_0}\circ L \circ e^{\Phi_0}$} (L2);
  \path (S1) to node [swap] {$\mathcal{V}_n\mapsto l_n$} (L1);
  \path (S2) to node [swap] {$\mathcal{V}_n\mapsto l^\prime_n$} (L2);
  \end{tikzpicture}
\caption{Background independence requires that the $L_\infty$ maps $\{l_n'\}$ obtained upon  conjugation by the MC-element $e^{\Phi_0}$ are equivalent to those obtained from the world sheet CFT in the background $\Phi_0$.}
\label{fig:back}
\end{figure}

 We can answer this question by addressing the cohomology problem on $\{L\in \op{Coder}^{sym}(SA_c)\}$. The bracket $[\cdot , \cdot ]$ on $\op{Coder}(SA)$ induces the Chevalley-Eilenberg differential $d_C = [L,\cdot ]$ on the deformation complex. The analysis proceeds in close analogy with that for open string theory with the result, 
 \begin{proposition}\label{prop:1}
Let  $S[\Phi,c,\bar c,b,\bar b]$ be the closed string world sheet CFT on $M_D$,  $A_c$ the corresponding module of conformal tensors and  $Q_c$ the BRST differential. Then 
\begin{align}\label{cec}
coh(d_c)=\emptyset \;\text{.}\nonumber
\end{align}
\end{proposition}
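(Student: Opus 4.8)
The plan is to mirror, at the level of the closed-string deformation complex, the strategy that established Theorem~\ref{thm:01} for the open string. Recall that in the open case one identified $coh(d_H)$ with the relative cohomology $coh(b_0-\bar b_0,Q_c)$ because infinitesimal deformations of the $A_\infty$-structure that survive modulo $d_H$-exact terms are precisely deformations of the closed string background. The point of Proposition~\ref{prop:1} is that on the \emph{closed} side there is no further, larger sector playing the role that the closed strings played for the open string: a deformation of $S[\Phi,c,\bar c,b,\bar b]$ that preserves the module $A_c$ can only be a deformation of the closed string background itself, and every such deformation is already realized by conjugating $L$ by a Maurer–Cartan element, hence is $d_c$-exact. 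So the first step is to set up the Chevalley–Eilenberg complex $(\op{Coder}^{sym}(SA_c), d_c = [L,\cdot])$ and to classify, via the operator formalism, the allowed infinitesimal deformations $\delta L$ coming from continuous deformations of the world-sheet CFT that preserve $A_c$.

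The second step is to show each such $\delta L$ is $d_c$-exact. Here I would invoke the picture of figure~\ref{fig:back}: a deformation of the closed string background by an exactly marginal direction corresponds, on the algebraic side, to shifting by a classical solution $\Phi_0$, i.e. $L \mapsto e^{-\Phi_0}\circ L\circ e^{\Phi_0}$; infinitesimally this is $\delta L = [L, \lambda]$ for the appropriate degree-zero coderivation $\lambda$ built from $\delta\Phi$, which is manifestly $d_c$-exact. The content of the proposition is that these exhaust the deformations: there is no deformation of the closed string amplitudes (\ref{Cdef}) — the closed-string analogue — that is not of this form, so the cohomology is empty (here ``$\emptyset$'' should be read as ``trivial''). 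Concretely one analyses the deformed CFT correlator exactly as in the proof of Theorem~\ref{thm:01}, decomposing $\delta L$ into a piece that shifts the background and a remainder, and showing the remainder is a coboundary.

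The key structural input is the decomposition theorem for $L_\infty$-algebras (the closed-string version of the Kajiura–Stasheff decomposition quoted after Remark~\ref{rem:2}): $(A_c,L)$ is isomorphic to the direct sum of a linearly contractible algebra and its minimal model, whose structure maps are the amplitudes (\ref{Cdef}). This lets one reduce the cohomology computation on the full complex to one on the minimal model, and on the minimal model the deformations that preserve $A_c$ are, by the CFT analysis, pure background shifts. One should also note the contrast with the open-string statement recorded in Remark~\ref{rem:2}: there, background shifts were $d_H$-exact but there remained a nontrivial class coming from genuine closed-string deformations; here that extra class is absent, which is exactly why $coh(d_c)$ collapses.

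I expect the main obstacle to be the second step — controlling the deformed correlator and showing that \emph{every} $A_c$-preserving deformation is a background shift up to a coboundary. This requires the same careful analysis of CFT correlators that underlies Theorem~\ref{thm:01}, now in the closed sector with both holomorphic and anti-holomorphic ghost insertions, together with a check that the relative-cohomology constraint ($b_0-\bar b_0$ and $Q_c$) that produced a nontrivial answer in the open case produces nothing new when one is already deforming within the closed string field theory itself. The combinatorics of the coderivation bracket on $SA_c$ and the bookkeeping of symmetrization factors is routine but must be done with care to see the cancellation.
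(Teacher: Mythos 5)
Your proposal follows essentially the same route as the paper, whose own ``proof'' consists only of the remark that the analysis proceeds in close analogy with the open string case, i.e.\ the same deformation analysis of the CFT correlators that underlies Theorem~\ref{thm:01}; your identification of background shifts with $d_c$-exact conjugations $[L,\lambda]$, and your observation that no analogue of the residual $coh(b_0-\bar b_0,Q_c)$ sector survives on the closed side, is exactly the intended content. Like the paper, you defer the decisive step --- that \emph{every} $d_c$-closed coderivation is a background shift up to a coboundary --- to the correlator analysis of \cite{Moeller:2010mh}, so this is a faithful sketch at the same level of detail as the original rather than a divergent or incomplete argument.
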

 
 An immediate consequence of this proposition is that the diagram in figure 2 commutes which, in turn, implies independence under shifts in the  background that preserve $A_c$. 

\begin{remark}\label{rem:3}
We should note that generic shifts in the background $\Phi$ will not preserve the module $A_c$. 
\end{remark}

Let us now return to the decomposition theorem which states that a homotopy algebra defined on a certain complex can be decomposed into the direct sum of a minimal and a linear contractible part. By definition, the linear contractible part is just a complex with vanishing cohomology, whereas the minimal part is a homotopy algebra of the same type as the initial one but without differential \cite{Kontsevich}. Furthermore, the initial and the decomposed algebra are isomorphic in the appropriate sense. Clearly, the minimal part can be extracted from the decomposed algebra by projection, and thus the decomposition theorem implies the minimal model theorem. 

The relevance of the minimal model theorem in physics is as follows: Suppose that the vertices of some field theory satisfy the axioms of some homotopy algebra. Then the minimal model describes the corresponding S-matrix amplitudes \cite{Kajiura open1, Kajiura open2}. Furthermore, the S-matrix amplitudes and the field theory vertices are quasi-isomorphic, which implies that their respective moduli spaces are isomorphic (this follows in general from the minimal model theorem). 

Now we conclude that string field theory is unique up to isomorphisms on a fixed conformal background (CFT): In string field theory, the differential is generically given by the BRST charge $Q$. Furthermore the CFT determines the S-matrix amplitudes. Thus a conformal background determines the minimal and the linear contractible part, which implies uniqueness up to isomorphisms.

An explicit construction of the decomposition model is known for the classical algebras ($A_\infty$ and $L_\infty$) \cite{Kajiura open1, Kajiura open2}. In the following we construct the decomposition model for quantum closed string field theory, formulated in the framework of $IBL_\infty$-algebras (see e.g. \cite{Cieliebak ibl,Munster:2011ij} for a definition).

Quantum closed string field theory has the algebraic structure of a loop homotopy Lie-algebra $(A,\LL)$  \cite{Markl loop}, i.e. 
\be\label{eq:loopII}
\LL=\sum \hbar^g L^g +\hbar \Omega^{-1} \;\text{,}\quad \LL^2=0 \;\text{,} 
\ee
where $L^g=\liftd{l^g}\in \op{Coder}^{cycl}(SA)$ and $\Omega^{-1}=\liftd{\omega^{-1}}\in \op{Coder}^2(SA)$ is the lift of the inverse of the odd symplectic structure ($D$ denotes the lift from multilinear maps to coderivations). 
We define $l_q\defineL\sum_g \hbar^g l^g$. The differential on $A$ is given by $d=l_{cl}\circ i_1$. Furthermore we abbreviate the collection of multilinear maps without the differential by $l_q^\ast\defineL l_q-d$. 

\begin{definition}
A pre Hodge decomposition of $A$ is a map $h:A\to A$ of degree minus one which is compatible with the symplectic structure and squares to zero. 
\end{definition}
For a given pre Hodge decomposition of $A$, we define the map 
\be\label{eq:hodge}
P=1+dh+hd \;\text{,}
\ee
and
\be\label{eq:metric}
g\defineL -\omega\circ d \mand g^{-1}\defineL h\circ\omega^{-1} \in A^{\w 2} \;\text{,}
\ee
where the symplectic structure $\omega$ and its inverse $\omega^{-1}$ are considered as a map from $A$ to $A^\ast$ and $A^\ast$ to $A$, respectively.
We define \emph{trees} constructed recursively from $l^\ast_q$ and $h$ via
\be\label{eq:treesq}
\mathsf{T}_{q}=h\circ l^\ast_{q}\circ e^{1+\mathsf{T}_{q}} \mand \mathsf{T}_{q}\circ i_1 =0 \;\text{.}
\ee

\begin{theorem}[\cite{Munster:2012gy}]\label{thm:dec}
Let $(A,\LL=D(d+l^\ast_q +\hbar \omega^{-1}))$ be a loop homotopy Lie algebra. For a given pre Hodge decomposition $h$, there is an associated loop homotopy Lie algebra 
\be\label{eq:deccoder}
\bar{\LL}=D(d+ \overset{(P)}{\mathsf{T}}_q\circ e^{\hbar g^{-1}} +\hbar \bar{\omega}^{-1}) \;\text{,}
\ee 
where $\bar{\omega}^{-1}=P^{\w2}(\omega^{-1})$ and $\overset{(P)}{\mathsf{T}}_q\circ e^{\hbar g^{-1}} $ represents the graphs with a single output labeled by $P$. Furthermore there is an $IBL_\infty$-isomorphism from $(A,\bar{\LL})$ to $(A,\LL)$. $d$ is called the linear contractible part and $\overset{(P)}{\mathsf{T}}_q\circ E(\hbar g^{-1}) +\hbar \bar{\omega}^{-1}$ the minimal part.
\end{theorem}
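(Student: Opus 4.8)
The plan is to run homological perturbation theory adapted to the $IBL_\infty$/BV setting: regard the linear part $d$ of $\LL$ as the unperturbed differential, $l^\ast_q+\hbar\omega^{-1}$ as the perturbation, and the pre Hodge map $h$ as a contracting homotopy whose associated chain map is $P=1+dh+hd$ of (\ref{eq:hodge}). Transferring the structure along $P$ and reading off the comparison morphism from the same data should produce $\bar\LL$ of (\ref{eq:deccoder}) together with the claimed isomorphism. Concretely I would proceed in four steps: (i) show the tree recursion (\ref{eq:treesq}) has a unique solution; (ii) assemble from it an $IBL_\infty$-morphism $\FF$ and identify the transferred structure with $\bar\LL$ as written; (iii) deduce $\bar\LL^2=0$ from $\LL^2=0$; (iv) argue that $\FF$ is invertible.

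For (i), I would equip $SA$ with the decreasing filtration by the number of $l^\ast_q$-vertices (equivalently, by total arity together with the genus weight in $\hbar$). Since $l^\ast_q=l_q-d$ contains no term proportional to the identity, the right-hand side $h\circ l^\ast_q\circ e^{1+\T_q}$ of (\ref{eq:treesq}) strictly raises this filtration, so the recursion determines $\T_q$ order by order, and the side condition $\T_q\circ i_1=0$ removes the remaining ambiguity; the decorated trees $\overset{(P)}{\T}_q$ are obtained by replacing the outermost $h$ by $P$. For (ii), I would assemble an $IBL_\infty$-morphism $\FF$ out of the one-output trees $1+\T_q$ and the loop-insertions $\hbar g^{-1}$, normalized so that $\FF$ equals the identity modulo the filtration above. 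A graphical bookkeeping should then show that pulling back $D(d+l^\ast_q+\hbar\omega^{-1})$ along $\FF$ yields precisely $\bar\LL=D(d+\overset{(P)}{\T}_q\circ e^{\hbar g^{-1}}+\hbar\bar\omega^{-1})$: an external input feeds a tree of $l^\ast_q$-vertices whose internal edges carry $h$, loops are closed by the propagator $g^{-1}=h\circ\omega^{-1}$ of (\ref{eq:metric}) weighted by $\hbar$, the unique output is dressed by $P$, and the purely two-output term $\hbar\omega^{-1}$ transfers to $\hbar\bar\omega^{-1}=\hbar P^{\w2}(\omega^{-1})$. This step is essentially an unwinding of (\ref{eq:metric})--(\ref{eq:treesq}).

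For (iii), I would use the homotopy identity $dh+hd=P-1$ together with $\LL^2=0$. Expanding $\bar\LL^2$ -- equivalently, the relation expressing that $\FF$ is an $IBL_\infty$-morphism from $\bar\LL$ to $\LL$ -- as a sum of graphs and cutting a single internal $h$-line produces the operator $dh+hd=P-1$ on that line; the ``$-1$'' pieces cancel against the graphs in which the two adjacent $l^\ast_q$-vertices have collided, and those collisions vanish by the genus-zero, $L_\infty$ part of $\LL^2=0$, while the ``$P$'' pieces reassemble into $\bar\LL^2$. The loop contributions require in addition the $\omega$-compatibility of $h$ (which gives $g^{-1}$ the symmetry and the differential identity needed to move it past $d$) together with the second-order, BV part of $\LL^2=0$; together these make the $\hbar$-dependent, handle-adding terms cancel. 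Cyclicity of $\bar\LL$ with respect to $\bar\omega$ follows from the $\omega$-compatibility of $h$, hence of $P$ and $g^{-1}$, that is part of the definition of a pre Hodge decomposition, so $(A,\bar\LL)$ is again a loop homotopy Lie algebra. For (iv), by construction the components of $\FF$ reduce, modulo the filtration by the number of internal vertices, to the identity of $SA$, so $\FF$ admits an inverse obtained by the same order-by-order recursion; equivalently, the perturbation lemma exhibits $\FF$ as one leg of a contraction, hence a homotopy equivalence, which in this pro-nilpotently filtered situation is an $IBL_\infty$-isomorphism $(A,\bar\LL)\to(A,\LL)$.

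I expect the main obstacle to be step (iii): besides the Koszul-sign bookkeeping coming from the symmetric coalgebra, the odd symplectic form, and the degree $-1$ map $h$, one must control how the genuinely quantum ingredient $\hbar\omega^{-1}$ -- a second-order operator, with no counterpart in the classical decomposition theorem -- interacts with the tree recursion, i.e.\ prove that the loops generated by $\hbar g^{-1}$ and the transferred term $\hbar\bar\omega^{-1}$ organize precisely into the $IBL_\infty$ relations for $\bar\LL$ rather than into some corrected structure.
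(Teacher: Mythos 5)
Your proposal is correct and follows essentially the same route as the paper: the paper's proof is precisely the ``explicit verification'' using (\ref{eq:loopII}), (\ref{eq:hodge}) and (\ref{eq:treesq}) that your four steps unpack --- unique solvability of the tree recursion, identification of the transferred structure $\bar{\LL}$, verification of $\bar{\LL}^2=0$ by cutting internal $h$-lines to produce $dh+hd=P-1$ and invoking $\LL^2=0$, and invertibility of the morphism from its identity linear part. Your closing caveat about the interaction of $\hbar\omega^{-1}$ with the tree recursion is exactly where the bookkeeping in the cited reference \cite{Munster:2012gy} is concentrated, but the strategy is the intended one.
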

\begin{proof}
  The proof follows by explicit verification, using equation (\ref{eq:loopII}), (\ref{eq:hodge}) and (\ref{eq:treesq}).
\end{proof}

Finally, we describe the quantum generalization of the classical open-closed homotopy algebra (OCHA) of Kajiura and Stasheff. As already alluded in the introduction, the OCHA can be described by an $L_\infty$-morphism, $N$, mapping from the closed string algebra $(A_c,L)$ to the deformation complex of the open string algebra $(\op{Coder}^{cycl}(TA_o),d_h,[\cdot,\cdot])$, i.e.
\be\no
e^N\circ L= D(d_h+[\cdot,\cdot])\circ e^N \;\text{,}
\ee
or equivalently
\be\label{eq:ochaex}
N\circ L= d_h\circ N +\inv{2} [N,N]\circ \Delta \;\text{,}
\ee
where $N$ describes the open-closed vertices and the comultiplication $\Delta:TA\to TA\tp TA$ is defined by
\be\no
\Delta (a_1\tp\dots\tp a_n)= \sum_{i=0}^n (a_1\tp \dots\tp a_i)\tp(a_{i+1}\tp\dots\tp a_n)\;\text{.}
\ee

In a similar way one can describe the QOCHA by an $IBL_\infty$-morphism from the loop homotopy Lie algebra $(A_c,\LL)$ of closed strings to the involutive Lie bialgebra $(\mathcal{A}_o,d_h,[\cdot,\cdot],\delta)$, where $\mathcal{A}_o\defineL \op{Hom}^{cycl}(TA_o,\Bbbk)$ \footnote{In the quantum case it is more convenient to work with $\op{Hom}^{cycl}(TA_o,\Bbbk)$ rather than with $\op{Coder}^{cycl}(TA_o)$}. 

The operation
\be\no
\delta:\mathcal{A}_o \to \mathcal{A}_o^{\w 2} \;\text{,}
\ee
is defined by 
\begin{align}\label{eq:delta}
(\delta f)&(a_1,\dots,a_n)(b_1,\dots,b_m)\\\no
\defineL&(-1)^f\sum_{i=1}^{n}\sum_{j=1}^{m}(-1)^\epsilon f(e_k,a_{i},\dots,a_n,a_1,\dots,a_{i-1},e^k,b_{j},\dots,b_m,b_1,\dots,b_{j-1})\;\text{,}
\end{align}
where $(-1)^\epsilon$ denotes the Koszul sign, $\{e_i\}$ is a basis of $A_o$ and $\{e^i\}$ is the corresponding dual basis satisfying $\omega_o({}_ie,e^j)={}_i\delta^j$.
This operation can be interpreted geometrically as the sewing of open strings on one boundary component.
In \cite{Chen liebi,Cieliebak ibl} it has been shown that $(\mathcal{A}_o,d_h,[\cdot,\cdot],\delta)$ defines an involutive Lie bialgebra, a special case of an $IBL_\infty$-algebra. In the language of $IBL_\infty$-algebras this is equivalent to the statement that
\be\no
\LL_o\defineL \liftd{d_h+[\cdot,\cdot]+\hbar\, \delta} 
\ee
squares to zero.

\begin{definition}[\cite{Munster:2011ij}] The quantum open-closed homotopy algebra is defined by an $IBL_\infty$-morphism from a loop homotopy Lie algebra $(A_c,\LL_c)$ to the involutive Lie bialgebra
$(\mathcal{A}_o,\LL_o)$, i.e.
\be\label{eq:qocha1}
\lifte{\frak{n}}\circ \LL_c=\LL_o\circ\lifte{\frak{n}}
\ee
\end{definition}
The maps $\frak{n}$ describe the open-closed vertices to all orders in $\hbar$.

Equation (\ref{eq:qocha1}) can be recast, such that the five distinct sewing operations in open-closed string field theory become apparent:
\begin{align}\label{eq:qocha2}
\frak{n}&\circ \LL_c + \frac{\hbar}{2}\bracketi{\frak{n}\circ \liftd{e_i}\w \frak{n}\circ \liftd{e^i}}\circ\Delta \\\no
&= \LL_o \circ \frak{n} +\inv{2}D({[\cdot,\cdot]})\circ \bracketi{\frak{n}\w \frak{n}}\circ\Delta -\bracketi{(D({[\cdot,\cdot]})\circ\frak{n})\w\frak{n}}\circ\Delta \;\text{.}
\end{align}
In equation (\ref{eq:qocha2}), $e_i$ and $e^i$ denote a basis and corresponding dual basis of $A_c$ w.r.t. the symplectic structure $\omega_c$. Obviously we recover the OCHA of equation 
(\ref{eq:ochaex}) in the limit $\hbar\to 0$.

Similarly as in the classical case, the morphism $\lifte{\frak{n}}$ is a quasi-isomorphism which implies isomorphism of the corresponding moduli spaces, i.e.
\be\no
\mathcal{M}(A_c,{\LL}_c)\cong \mathcal{M}(A_o,{\LL}_o)\;\text{.}
\ee

\begin{theorem}[\cite{Munster:2011ij}]\label{qmoduli}
The moduli space of any loop homotopy Lie algebra is empty, 
\be\no
\mathcal{M}(A_c,{\LL}_c) = \emptyset \;\text{.}
\ee
\end{theorem}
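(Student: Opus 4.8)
The plan is to prove the stronger statement that a loop homotopy Lie algebra $(A_c,\LL_c)$ with $A_c\neq 0$ admits \emph{no} Maurer--Cartan element at all, so that the gauge quotient $\mathcal{M}(A_c,\LL_c)$ is automatically empty. Recall that a Maurer--Cartan element is a group-like $e^{\Phi}=\sum_n\tinv{n!}\Phi^{\w n}\in\widehat{SA_c}[[\hbar]]$, with $\Phi\in\widehat{A_c}[[\hbar]]$, satisfying $\LL_c(e^{\Phi})=0$. The whole theorem therefore reduces to exhibiting an unremovable obstruction in that equation, and the claim is that the obstruction is exactly the genus-counting term $\hbar\,\Omega^{-1}$ appearing in (\ref{eq:loopII}).

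First I would expand $\LL_c(e^{\Phi})=0$ according to the symmetric word-length (``number of outputs'') grading $SA_c=\bigoplus_{k\ge 0}A_c^{\w k}$. With $\LL_c=\sum_g\hbar^g D(l^g)+\hbar\, D(\omega^{-1})$, where $l^g=\sum_n l^g_n$, $l^g_n\colon A_c^{\w n}\to A_c$ and $\omega^{-1}\in A_c^{\w 2}$, the co-Leibniz rule gives $D(l^g)(e^{\Phi})=l^g(e^{\Phi})\cdot e^{\Phi}$ with $l^g(e^{\Phi})=\sum_n\tinv{n!}l^g_n(\Phi^{\w n})\in A_c$, and $D(\omega^{-1})(e^{\Phi})=\omega^{-1}\w e^{\Phi}$. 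Setting $\mathcal{E}\defineL\sum_g\hbar^g l^g(e^{\Phi})$, the components of $\LL_c(e^{\Phi})$ in $A_c^{\w 0}$, $A_c^{\w 1}$, $A_c^{\w 2}$ are, respectively, $0$ (no operation of $\LL_c$ has zero outputs), $\mathcal{E}$, and $\mathcal{E}\w\Phi+\hbar\,\omega^{-1}$; here the last term is $\Omega^{-1}$ acting on the unit $1\in e^{\Phi}$.

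I would then conclude as follows. If $e^{\Phi}$ is Maurer--Cartan, all components vanish, so $\mathcal{E}=0$ from the $A_c^{\w 1}$-part and $\mathcal{E}\w\Phi+\hbar\,\omega^{-1}=0$ from the $A_c^{\w 2}$-part; substituting the first into the second forces $\hbar\,\omega^{-1}=0$, hence $\omega^{-1}=0$ since $\hbar$ is a free formal parameter. But $\omega^{-1}$ is the inverse of the non-degenerate odd symplectic form $\omega_c$ on $A_c\neq 0$ and is therefore nonzero (in a symplectic basis $\omega^{-1}=\sum_i e_i\w e^i$). This contradiction shows that no Maurer--Cartan element exists, i.e.\ $\mathcal{M}(A_c,\LL_c)=\emptyset$.

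The difficulty here is purely bookkeeping rather than structural: one must pin down the Koszul signs and normalisations in the action of a coderivation on a group-like element, and in particular verify that, once the $A_c^{\w 1}$-equation $\mathcal{E}=0$ has been imposed, the term $\hbar\,\omega^{-1}$ in the $A_c^{\w 2}$-equation genuinely survives and cannot be cancelled by anything else. An alternative would be to first invoke the decomposition theorem (Theorem~\ref{thm:dec}) to replace $\LL_c$ by the isomorphic $\bar{\LL}_c$, whose minimal part still carries an $\hbar\,\bar{\omega}^{-1}$ term, and then run the same argument on the (isomorphic) moduli space; but the direct computation is shorter and makes the obstruction transparent. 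Physically, this is the statement that the handle operator $\hbar\,\Omega^{-1}$ obstructs the existence of a quantum closed-string vacuum in this formulation.
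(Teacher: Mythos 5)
Your coderivation bookkeeping is correct as far as it goes, but the argument rests on an ansatz for the Maurer--Cartan element that is too restrictive, and this is where it diverges from (and falls short of) the paper's proof. For a loop homotopy Lie algebra viewed as an $IBL_\infty$-algebra --- the setting the paper works in, and the one needed for the pushforward $\mathcal{M}(A_c,\LL_c)\cong\mathcal{M}(\mathcal{A}_o,\LL_o)$ under $\lifte{\frak{n}}$ to make sense --- a Maurer--Cartan element is $e^{\cc}$ with a \emph{general} $\cc=\sum_{g,k}\hbar^g\,\cc_{g,k}$, $\cc_{g,k}\in A_c^{\w k}$, not just $e^{\Phi}$ with $\Phi$ of word length one; the paper's proof sketch says explicitly ``for a general ansatz''. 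Once components of word length $\ge 2$ are allowed, the $A_c^{\w 2}$-part of $\LL_c(e^{\cc})$ at order $\hbar$ contains, besides $\omega^{-1}$, the term $(d\w 1+1\w d)(\cc_{1,2})$ and further contributions of the $l^g$ acting on higher components, so the equation only forces $\omega^{-1}$ to be \emph{exact} for the (twisted) differential, not to vanish. The genuine obstruction therefore lives in cohomology: exactness of $\omega^{-1}$ means its class in $coh(d)^{\w 2}$ is zero, but that class is the inverse of the induced symplectic pairing on $coh(d)$, which is non-degenerate; hence $coh(d)=0$, and the moduli space is trivial. This is precisely the two-step argument the paper sketches (``the order $\hbar$ term \dots together with the non-degeneracy of the symplectic form implies triviality of the cohomology, which in turn implies that $\mathcal{M}(A_c,\LL_c)=\emptyset$'').

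A concrete symptom that your chain-level conclusion ($\omega^{-1}=0$, hence no Maurer--Cartan element whenever $A_c\neq 0$) is too strong: it would apply verbatim to the topological string, where $\omega$ is non-degenerate off-shell (so $\omega^{-1}\neq 0$ in $A_c^{\w 2}$) but degenerates on-shell --- yet the Remark following the theorem states that precisely in that situation the conclusion fails. Your argument cannot distinguish on-shell from off-shell degeneracy because it never passes to cohomology. To repair the proof, keep your expansion but take the general $\cc$, isolate the order-$\hbar$, word-length-two component of $\LL_c(e^{\cc})=0$, show that it exhibits $\omega^{-1}$ as an exact element, and then invoke non-degeneracy of the induced pairing on $coh(d)$ to conclude $coh(d)=0$ and hence $\mathcal{M}(A_c,\LL_c)=\emptyset$.
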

\begin{proof}
The proof follows from considering the order $\hbar$ term of the Maurer Cartan equation for a general ansatz. This equation, together with the non-degeneracy of the symplectic form implies triviality of the cohomology, which in turn implies that $\mathcal{M}(A_c,{\LL}_c) = \emptyset$.
\end{proof}

\begin{remark}\label{}
The story is different for the topological string, where the symplectic structure $\omega$ degenerates on-shell.  Under this condition, theorem \ref{qmoduli} does not hold anymore, which implies consistency of open topological string theory at the quantum level in contrast to bosonic string theory.
\end{remark}


\section*{Acknowledgement} 
The authors would like to thank Barton Zwiebach for many informative discussions and subtle remarks as well as Branislav Jurco, Kai Cieliebak and Sebastian Konopka for helpful discussions. K.M. would like to thank  Martin Markl and Martin Doubek who stimulated his interest in operads and their applications to string field theory. This project was supported in parts by the DFG Transregional Collaborative Research Centre TRR 33,
the DFG cluster of excellence ``Origin and Structure of the Universe'' as well as the DAAD project  54446342.

\end{document}